\newtheorem{theorem}{Theorem}[section]
\newtheorem{corollary}[theorem]{Corollary}
\theoremstyle{definition}
\newtheorem{example}[theorem]{Example}
\theoremstyle{remark}
\newtheorem{remark}[theorem]{Remark}
\numberwithin{equation}{section}
\begin{document}

\title{Noncommutative Chebyshev  inequality involving the Hadamard product}

\author[ M. Bakherad, S.S. Dragomir]{ Mojtaba Bakherad$^1$ and Silvestru Sever Dragomir$^2$}

\address{$^1$Department of Mathematics, Faculty of Mathematics, University of Sistan and Baluchestan, P.O. Box 98135-674, Zahedan, Iran.}
\email{mojtaba.bakherad@yahoo.com; bakherad@member.ams.org}
\address{$^1$School of Computer Science and Mathematics, Victoria
University of Technology, PO Box 14428, Melbourne City
MC, Victoria 8001, Australia.}
\email{sever.dragomir@vu.edu.au}

\subjclass[2010]{Primary 47A63, Secondary 47A60.}

\keywords{Chebyshev inequality; Hadamard product; Bochner integral;
 operator mean.}

\begin{abstract}
We present several operator extensions of the Chebyshev
inequality for Hilbert space operators. The main version deals with the synchronous Hadamard property for
Hilbert space operators. Among other inequalities, it is shown that if
 ${\mathscr A}$ is a $C^*$-algebra, $T$ is a compact Hausdorff
space equipped with a Radon measure $\mu$ as a totaly order set, then
{\footnotesize\begin{align*}
\int_{T} \alpha(s) d\mu(s)&\int_{T}\alpha(t)(A_t\circ B_t) d\mu(t)\\&\geq\Big{(}\int_{T}\alpha(t) (A_tm_{r,\alpha} B_t) d\mu(t)\Big{)}\circ\Big{(}\int_{T}\alpha(s) (A_sm_{r,1-\alpha} B_s) d\mu(s)\Big{)},
\end{align*}}
where  $\alpha\in[0,1]$, $r\in[-1,1]$ and $(A_t)_{t\in T}, (B_t)_{t\in T} $ are positive increasing fields in
$\mathcal{C}(T,\mathscr A)$.
\end{abstract} \maketitle
%~~~~~~~~~~~~~~~~~~~~~~~~~~~~~~~~~~~~~~~~~~~~~~~~~~~~~~~~~~~~~~~~~~~~~~~~~~~~~~~~~~~~~~~~~~~~~~~~~~~~~~~~~~~~~~~~~~~~~~~~~~~~~~~~~~~~~~~~~~~~~~~~~~~~~~~~~~~~~~~
%~~~~~~~~~~~~~~~~~~~~~~~~~~~~~~~~~~~~~~~~~~~~~~~~~~~~~~~~~~~~~~~~~~~~~~~~~~~~~~~~~~~~~~~~~~~~~~~~~~~~~~~~~~~~~~~~~~~~~~~~~~~~~~~~~~~~~~~~~~~~~~~~~~~~~~~~~~~~~~~
\section{Introduction and preliminaries}

Let ${\mathbb B}({\mathscr H})$ denote the $C^*$-algebra of all
bounded linear operators on a complex Hilbert space ${\mathscr H}$.  In the case when ${\rm dim}{\mathscr H}=n$, we identify ${\mathbb
B}({\mathscr H})$ with the matrix algebra $\mathbb{M}_n$ of all
$n\times n$ matrices with entries in the complex field. An operator $A\in{\mathbb B}({\mathscr H})$ is called positive $(A\geq0)$
 if $\langle Ax,x\rangle\geq0$
for all $x\in{\mathscr H }$.  The set of all positive  operators  is denoted by ${\mathbb B}({\mathscr H})_+$. For
selfadjoint operators $A, B\in{\mathbb B}({\mathscr H})$, we say
$B\geq A$  if $B-A\geq0$.\\   The Gelfand map $f(t)\mapsto f(A)$ is an
isometrically $*$-isomorphism between the $C^*$-algebra
$C({\rm sp}(A))$ of continuous functions on the spectrum ${\rm sp}(A)$
of a selfadjoint operator $A$ and the $C^*$-algebra generated by $A$ and the identity operator $I$. If $f, g\in C({\rm sp}(A))$, then
$f(t)\geq g(t)\,\,(t\in{\rm sp}(A))$ implies that $f(A)\geq g(A)$.\\
If $f$ be a continuous real valued function on an interval $J$. The
function $f$ is called operator monotone  if
$A\leq B$ implies $f(A)\leq f(B)$  for all
$A, B\in {\mathbb B}({\mathscr H})$ with spectra in $J$.
Given an orthonormal basis $\{e_j\}$ of a Hilbert space ${\mathscr H}$, the Hadamard product $A \circ B$ of two operators $A, B \in {\mathbb B}({\mathscr H})$ is defined by $\langle A\circ B e_i, e_j\rangle = \langle Ae_i, e_j\rangle \langle Be_i, e_j\rangle$. It is known that the Hadamard
product can be presented by filtering the
tensor product $A \otimes B$ through a positive linear map. In fact,
$
 A\circ B=U^*(A\otimes B)U,
$
where $U:{\mathscr H}\to {\mathscr H}\otimes{\mathscr H}$ is the isometry
defined by $Ue_j=e_j\otimes e_j$; see \cite{FUJ}. For matrices, one easily observe \cite{STY} that the Hadamard product of $A=(a_{ij})$ and
$B=(b_{ij})$ is $A\circ B=(a_{ij}b_{ij})$, a principal
submatrix of the tensor product $A\otimes B=(a_{ij}B)_{1 \leq i,j\leq n}$. From now on when we deal with the Hadamard product of operators, we explicitly assume that we fix an orthonormal basis.

The axiomatic theory of operator means has been developed by Kubo and Ando \cite{ando}. An operator mean is a binary operation $\sigma$ defined on the set of strictly positive operators, if the following conditions hold:
\begin{itemize}
\item [(1)] $A\leq C, B\leq D$ imply
 $A\sigma B\leq C\sigma D$;
\item [(2)] $A_n\downarrow A, B_n\downarrow B$ \hspace{.2cm}imply
 $A_n\sigma B_n\downarrow A\sigma B$, where $A_n\downarrow A$ means that $A_1\geq A_2\geq \cdots$ and $A_n\rightarrow A$ as $n\rightarrow\infty$ in the strong operator topology;
\item [(3)] $T^*(A\sigma B)T\leq (T^*AT)\sigma (T^*BT)\hspace{.5cm}(T\in{\mathbb B}({\mathscr H}));$
\item [(4)] $I\sigma I=I.$
\end{itemize}
 There exists an affine order isomorphism between the class of
operator means and the class of positive operator monotone functions
$f$ defined on $(0,\infty)$ with $f(1)=1$ via
$f(t)I=I\sigma(tI)\hspace{.1cm}(t>0)$. In addition, $A\sigma
B=A^{1\over 2}f(A^{-1\over2}BA^{-1\over2})A^{1\over2}$ for all
strictly positive operators $A, B$. The operator monotone function
$f$ is called the representing function of $\sigma$. Using a limit
argument by $A_\varepsilon=A+\varepsilon I$, one can extend the
definition of $A\sigma B$ to positive operators.  An operator mean $\sigma$ is symmetric if $A\sigma B=B\sigma A$ for all
 $A, B\in{\mathbb B}({\mathscr H})_+$. For a symmetric operator mean $\sigma$, a parametrized operator mean $\sigma_t,\,\, 0 \leq t\leq 1$ is called an interpolational path for $\sigma$ if it satisfies
\begin{itemize}
\item [(1)] $A\sigma_0 B=A$,
$A\sigma_{1/2}B=A\sigma B$, and $A\sigma_1B=B$;
\item [(2)] $(A\sigma_p B)\sigma(A\sigma_qB)=A\sigma_{\frac{p+q}{2}}B$ for all  $p,q\in[0,1]$;
\item [(3)] The map $t\in[0,1]\mapsto A\sigma_tB$ is norm continuous for each $A$ and $B$.
\end{itemize}
It is easy to see that the set of all $r\in[0, 1]$ satisfying
\begin{align}\label{fujii23}
(A\sigma_p B)\sigma_r(A\sigma_qB)=A\sigma_{rp+(1-r)q}B
 \end{align}
for all $p, q$ is a convex subset of $[0, 1]$ including $0$ and $1$.
The power means interpolational paths are
 {\footnotesize\begin{align*}
Am_{r,t}B=A^\frac{1}{2}\left({1-t+t(A^\frac{-1}{2}BA^\frac{-1}{2}})^r\right)^\frac{1}{r}A^\frac{1}{2}\qquad(t\in[0,1]).
 \end{align*}}
 In particular, we have the operator weighted arithmetic mean $Am_{1,t}B=A\nabla_t B=(1-t)A+tB,$ the operator weighted geometric mean $Am_{0,t}B=A\sharp_t B$ and the operator weighted harmonic  mean $Am_{-1,t}B=A!_t B=\left((1-t)A^{-1}+tB^{-1}\right)^{-1}$. The representing function $F_{r,t}$ for $m_{r,t}$ is defined as $
F_{r,t}(x)=1m_{r,t}x=(1-t+tx^r)^\frac{1}{r}\qquad(x>0).
 $
Let us consider the real sequences $a=(a_1, \cdots, a_n)$, $b=(b_1,
\cdots, b_n)$ and the non-negative sequence $w=(w_1, \cdots, w_n)$.
Then the weighed Chebyshev function is defined by
 ${\footnotesize
 T(w; a, b):=\sum_{i=1}^nw_{i}\sum_{i=1}^nw_ja_jb_j
 -\sum_{i=1}^nw_ia_i\sum_{j=1}^nw_jb_j.
}$
In 1882, Chebyshev  \cite{che1} proved that if $a$ and $b$ are
monotone in the same sense, then
\begin{align}\label{12}
 \sum_{i=1}^n\omega_{i}\sum_{j=1}^n\omega_ja_jb_j
 \geq\sum_{i=1}^n\omega_ia_i\sum_{j=1}^n\omega_jb_j.
 \end{align}
 Behdzed in \cite{behzad} extended inequality \eqref{12} to
{\footnotesize\begin{align}
 \sum_{i=1}^n\omega_{i}\sum_{j=1}^n\nu_ja_jb_j+\sum_{i=1}^n\nu_{i}\sum_{j=1}^n\omega_ja_jb_j\geq\sum_{i=1}^n\nu_ia_i\sum_{j=1}^n\omega_jb_j+
 \sum_{i=1}^n\omega_ia_i\sum_{j=1}^n\nu_jb_j,
 \end{align}}
 where $a_1\leq \cdots\leq a_n$, $b_1\leq
\cdots\leq b_n$  and   $\omega_1, \cdots, \omega_n$, $\nu_1, \cdots, \nu_n$ are nonnegative real numbers.
Some integral generalizations of the Chebyshev inequality was given by Barza,
Persson and Soria \cite{BPS}. The Chebyshev inequality is a complement of the Gr\" uss inequality; see \cite{MR} and references therein. Dragomir
presented some Chebyshev inequalities for selfadjoint operators
acting on Hilbert spaces in \cite{abd,DRA2}.\\ A related notion is synchronicity.
Recall that two continuous functions $f, g:J\rightarrow \mathbb{R}$
are synchronous on an interval $J$, if
 \begin{align*}
 \big{(}f(t)-f(s)\big{)}\big{(}g(t)-g(s)\big{)}\geq 0
 \end{align*}
for all $s,t \in J$. It is obvious that, if $f,g$ are monotonic and
have the same monotonicity, then they are synchronic.

Let ${\mathscr A}$ be a $C^*$-algebra of operators acting on  a Hilbert space, let $T$ be a compact
Hausdorff space and  $\mu(t)$ be a Radon
measure on $T$. A field $(A_t)_{t\in T}$ of operators in ${\mathscr
A}$ is called a continuous field of operators if the function $t
\mapsto A_t$ is norm continuous on $T$ and the function $t \mapsto
\|A_t\|$ is integrable, one can form the Bochner integral
$\int_{T}A_t{\rm d}\mu(t)$, which is the unique element in
${\mathscr A}$ such that
$\varphi\left(\int_TA_t{\rm d}\mu(t)\right)=\int_T\varphi(A_t){\rm d}\mu(t)$
for every linear functional $\varphi$ in the norm dual ${\mathscr
A}^*$ of ${\mathscr A}$.
 By \cite{bakh} for operators $B,A_t\in{\mathscr A}$ we have
{\footnotesize\begin{align}\label{suit}
\int_{T} (A_t\circ B)d\mu(t)
=\int_{T} A_td\mu(t)\circ B\qquad (A_t, B\in{\mathscr A}).
\end{align}}
We say that two fields $(A_t)_{t\in T}$ and $(B_t)_{t\in T}$
  have the synchronous Hadamard property if
\begin{align*}
\big{(}A_t-A_s\big{)}\circ\big{(}B_t-B_s\big{)}\geq0
\end{align*}
for all $s, t\in T$.\\
In \cite{bakh}, the authors showed that
{\footnotesize\begin{align}\label{22big}
\int_{T} \alpha(s) d\mu(s)\int_{T}\alpha(t)(A_t\circ B_t) d\mu(t)\geq\Big{(}\int_{T}\alpha(t) A_t d\mu(t)\Big{)}\circ\Big{(}\int_{T}\alpha(s) B_s d\mu(s)\Big{)},
 \end{align}}
where ${\mathscr A}$ is a $C^*$-algebra, $T$ is a compact
 Hausdorff space equipped with a Radon measure $\mu$,  $(A_t)_{t\in T}$ and $(B_t)_{t\in T}$ are fields in $\mathcal{C}(T,\mathscr A)$ with the synchronous Hadamard property and $\alpha: T\rightarrow [0, +\infty)$ is a measurable function. They also presented
 {\footnotesize\begin{align}\label{bigbag}
\int_{T} \alpha(s) d\mu(s)\int_{T}\alpha(t)(A_t\circ B_t) d\mu(t)\geq\Big{(}\int_{T}\alpha(t) (A_t\sharp_\mu B_t) d\mu(t)\Big{)}\circ\Big{(}\int_{T}\alpha(s) (A_s\sharp_{1-\mu} B_s) d\mu(s)\Big{)},
\end{align}}
where ${\mathscr A}$ is a $C^*$-algebra, $T$ is a compact Hausdorff
space equipped with a Radon measure $\mu$ as a totaly order set,
$(A_t)_{t\in T}, (B_t)_{t\in T} $ are positive increasing fields in
$\mathcal{C}(T,\mathscr A)$, $\alpha: T\rightarrow [0,
+\infty)$ is a measurable function  and $\mu\in[0,1]$.

In this paper, we provide several operator extensions of the
Chebyshev inequality of the form \eqref{22big} and \eqref{bigbag}. We present our main
results dealing with the Hadamard product of Hilbert space operators.

~~~~~~~~~~~~~~~~~~~~~~~~~~~~~~~~~~~~~~~~~~~~~~~~~~~~~~~~~~~~~~~~~~~~~~~~~~~~~~~~~~~~~~~~~~~~~~~~~~~~~~~~~~~~~~~~~~~~~~~~~~~~~~~~~~~~~~~~~~~~~~~~~~~~~~~~~~~~~~~

\section{Chebyshev inequality involving Hadamard product}

This section is devoted to presentation of some operator Chebyshev
inequalities dealing with the Hadamard product.
\noindent The first result reads as follows.
\begin{theorem}\label{20}
 Let ${\mathscr A}$ be a $C^*$-algebra, $T$ be a compact
 Hausdorff space equipped with a Radon measure $\mu$, let $(A_t)_{t\in T}$ and $(B_t)_{t\in T}$ be fields in $\mathcal{C}(T,\mathscr A)$ with the synchronous Hadamard property and let $\alpha, \beta: T\rightarrow [0, +\infty)$ be measurable functions. Then
{\footnotesize\begin{align}\label{22}
\int_{T}& \alpha(s) d\mu(s)\int_{T}\beta(t)(A_t\circ B_t) d\mu(t)+ \int_{T} \beta(s) d\mu(s)\int_{T}\alpha(t)(A_t\circ B_t) d\mu(t)\nonumber\\&\geq\Big{(}\int_{T}\alpha(t) A_t d\mu(t)\Big{)}\circ\Big{(}\int_{T}\beta(s) B_s d\mu(s)\Big{)}
+\Big{(}\int_{T}\beta(t) A_t d\mu(t)\Big{)}\circ\Big{(}\int_{T}\alpha(s) B_s d\mu(s)\Big{)}.
 \end{align}}
 \end{theorem}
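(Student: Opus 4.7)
The plan is to mimic the classical proof of Chebyshev's inequality by starting from the defining synchronicity condition and integrating against the nonnegative product weight $\alpha(s)\beta(t)$ on $T\times T$.

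First I would expand the synchronous Hadamard assumption. For all $s,t\in T$,
\[
(A_t-A_s)\circ(B_t-B_s)\geq 0,
\]
which, using bilinearity of the Hadamard product, rearranges to
\[
A_t\circ B_t+A_s\circ B_s\geq A_t\circ B_s+A_s\circ B_t.
\]
Next, since $\alpha(s)\beta(t)\geq 0$, multiplying preserves the operator inequality, and I would integrate both sides over $T\times T$ with respect to $\mu\otimes\mu$ (via Bochner integration, applying $\varphi\in\mathscr{A}^*$ and invoking Fubini for scalar integrals to justify the manipulation).

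The left-hand side splits by Fubini into two products of single integrals, giving exactly the two terms on the left of \eqref{22}. The work is then to identify the right-hand side. Here I would use the key commutation identity \eqref{suit} in two stages: fixing $s$ first and applying \eqref{suit} to the inner $t$-integral yields
\[
\int_T \beta(t)\bigl(A_t\circ B_s\bigr)\,d\mu(t)=\Bigl(\int_T \beta(t)A_t\,d\mu(t)\Bigr)\circ B_s,
\]
since the scalar $\beta(t)$ passes freely through the Hadamard product; then a second application of \eqref{suit} to the outer $s$-integral with the constant operator $C:=\int_T\beta(t)A_t\,d\mu(t)$ in the first slot (and weight $\alpha(s)$) produces
\[
\int_T\alpha(s)\bigl(C\circ B_s\bigr)\,d\mu(s)=C\circ\Bigl(\int_T \alpha(s)B_s\,d\mu(s)\Bigr).
\]
The same reasoning with the roles of $\alpha,\beta$ swapped handles the companion term, and the two contributions together reproduce the right-hand side of \eqref{22}.

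The main obstacle is essentially a bookkeeping one: ensuring that \eqref{suit}, which is stated for a Hadamard product of an integrable field with a single operator, is applied in the correct slot and in the correct order so that the scalar weights end up inside the appropriate integrals. Once the iterated application of \eqref{suit} is carried out cleanly on each of the four cross terms produced by Fubini, the inequality follows directly from the pointwise synchronicity. No deeper analytic obstacle is expected, because positivity is preserved by the Bochner integral and by the Hadamard product, and the scalar weights commute with every operation used.
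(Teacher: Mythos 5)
Your proof is correct and rests on the same two ingredients as the paper's: the pointwise positivity $(A_t-A_s)\circ(B_t-B_s)\geq 0$ integrated over $T\times T$ against nonnegative weights, together with the identity \eqref{suit} applied in each slot to identify the cross terms. The only (cosmetic) difference is that you integrate the pointwise inequality forward against the single weight $\alpha(s)\beta(t)$, whereas the paper writes the difference $\Lambda$ of the two sides as a double integral and symmetrizes in $s$ and $t$ before recognizing the synchronicity expression; both computations produce the same double integral.
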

\begin{proof}
 We put
 {\footnotesize\begin{eqnarray*}
 \Lambda&=&\int_{T} \alpha(s) d\mu(s)\int_{T}\beta(t)(A_t\circ B_t) d\mu(t)+ \int_{T} \beta(s) d\mu(s)\int_{T}\alpha(t)(A_t\circ B_t) d\mu(t)\\&&
 -\Big{(}\int_{T}\alpha(t) A_t d\mu(t)\Big{)}\circ\Big{(}\int_{T}\beta(s) B_s d\mu(s)\Big{)}
-\Big{(}\int_{T}\beta(t) A_t d\mu(t)\Big{)}\circ\Big{(}\int_{T}\alpha(s) B_s d\mu(s)\Big{)}
 \end{eqnarray*}}
 then
 {\footnotesize\begin{eqnarray*}
 \Lambda&=&
 \int_{T}\int_{T} \alpha(s)\beta(t)(A_t\circ B_t) d\mu(t)d\mu(s)+\int_{T}\int_{T} \beta(s)\alpha(t)(A_t\circ B_t) d\mu(t)d\mu(s)\\&&-\int_{T}\Big{(}\int_{T}\alpha(t) A_t d\mu(t)\Big{)}\circ \beta(s)B_s d\mu(s)\int_{T}\Big{(}\int_{T}\beta(t) A_t d\mu(t)\Big{)}\circ \alpha(s)B_s d\mu(s)\\&&
  \qquad\qquad \qquad\qquad \qquad\qquad\qquad\qquad \qquad\qquad\qquad \qquad \qquad (\normalsize\textrm {by~} \eqref{suit})\\&
 =&\int_{T} \int_{T} \alpha(s)\beta(t)(A_t\circ B_t) d\mu(t)d\mu(s)+\int_{T} \int_{T} \beta(s)\alpha(t)(A_t\circ B_t) d\mu(t)d\mu(s)\\&&-\int_{T}\int_{T}\alpha(t)\beta(s)( A_t \circ B_s) d\mu(t)d\mu(s)-\int_{T}\int_{T}\beta(t)\alpha(s)( A_t \circ B_s) d\mu(t)d\mu(s)\\&&
 \qquad\qquad \qquad\qquad \qquad\qquad\qquad\qquad \qquad\qquad\qquad \qquad \qquad (\normalsize\textrm {by~} \eqref{suit})\\&
  =&\int_{T} \int_{T} \Big[\alpha(s)\beta(t)(A_t\circ B_t) + \beta(s)\alpha(t)(A_t\circ B_t) \\&&-\alpha(t)\beta(s)( A_t \circ B_s) -\beta(t)\alpha(s)( A_t \circ B_s)\Big] d\mu(t)d\mu(s)\\&=&
  {1\over2}\Bigg(\int_{T} \int_{T} \Big[\alpha(s)\beta(t)(A_t\circ B_t) + \beta(s)\alpha(t)(A_t\circ B_t) \\&&-\alpha(t)\beta(s)( A_t \circ B_s) -\beta(t)\alpha(s)( A_t \circ B_s)\Big] d\mu(t)d\mu(s)\\&&+
\int_{T} \int_{T} \Big[\alpha(s)\beta(t)(A_t\circ B_t) + \beta(s)\alpha(t)(A_t\circ B_t) \\&&-\alpha(t)\beta(s)( A_t \circ B_s) -\beta(t)\alpha(s)( A_t \circ B_s)\Big] d\mu(t)d\mu(s)\Bigg)
  \end{eqnarray*}}
 Now, if we interchange $s$ with $t$ in the second expression of the last equation, then we get
  {\footnotesize\begin{eqnarray*}
  \Lambda&\geq&{1\over2}\Bigg(\int_{T} \int_{T} \Big[\alpha(s)\beta(t)(A_t\circ B_t) + \beta(s)\alpha(t)(A_t\circ B_t) \\&&-\alpha(t)\beta(s)( A_t \circ B_s) -\beta(t)\alpha(s)( A_t \circ B_s)\Big] d\mu(t)d\mu(s)\\&&+
 \int_{T} \int_{T} \Big[\alpha(t)\beta(s)(A_s\circ B_s) + \beta(t)\alpha(s)(A_s\circ B_s) \\&&-\alpha(s)\beta(t)( A_s \circ B_t) -\beta(s)\alpha(t)( A_s \circ B_t)\Big] d\mu(s)d\mu(t)\Bigg)\\&=&{1\over2}\Bigg(\int_{T} \int_{T} \Big[\alpha(s)\beta(t)(A_t\circ B_t) + \beta(s)\alpha(t)(A_t\circ B_t) \\&&-\alpha(t)\beta(s)( A_t \circ B_s) -\beta(t)\alpha(s)( A_t \circ B_s)\Big] d\mu(t)d\mu(s)\\&&+
 \int_{T} \int_{T} \Big[\alpha(t)\beta(s)(A_s\circ B_s) + \beta(t)\alpha(s)(A_s\circ B_s) \\&&-\alpha(s)\beta(t)( A_s \circ B_t) -\beta(s)\alpha(t)( A_s \circ B_t)\Big] d\mu(t)d\mu(s)\Bigg)\end{eqnarray*}}

 {\footnotesize\begin{eqnarray*}&=&
  {1\over2}\int_{T}\int_{T}\Big{[}\beta(s)\alpha(t)\big{(}A_t-A_s\big{)}\circ\big{(}B_t-B_s\big{)} +
 \alpha(s)\beta(t)\big{(}A_t-A_s\big{)}\circ\big{(}B_t-B_s\big{)}\Big{]} d\mu(t) d\mu(s)
 \\&\geq&0.\qquad(\normalsize\textrm{since the fields}\hspace{.1cm}(A_t)\hspace{.1cm} \normalsize\textrm{and}\hspace{.1cm} (B_t)\hspace{.1cm}
 \normalsize\textrm{have synchronous Hadamard property})
 \end{eqnarray*}}
 \end{proof}
 In the discrete case $T=\{1,\cdots, n\}$, let $\alpha(i)=\omega_i$ and $\beta(i)=\nu_i$, where $\omega_i, \nu_i\geq0$ $(1\leq i\leq n)$.
 Then Theorem \ref{20} forces the following corollary.
\begin{corollary}\label{31}
 Suppose that  $A_j,B_j\in{\mathbb B}({\mathscr H})\,\,(1\leq j\leq n)$ are selfadjoint operators with the synchronous Hadamard property and
 $\omega_1, \cdots, \omega_n$,  $\nu_1, \cdots, \nu_n$ are positive numbers. Then
 {\footnotesize\begin{align}\label{13}
 \sum_{ i=1}^n \omega_i\sum_{ j=1}^n\nu_j (A_j\circ B_j)&+\sum_{ i=1}^n \nu_i\sum_{ j=1}^n\omega_j (A_j\circ B_j)\nonumber\\&\geq\Big{(}\sum_{ i=1}^n \omega_i A_i\Big{)}\circ\Big{(}\sum_{ j=1}^n
 \nu_j B_j\Big{)}+\Big{(}\sum_{ i=1}^n \nu_i A_i\Big{)}\circ\Big{(}\sum_{ j=1}^n
 \omega_j B_j\Big{)}.
\end{align}}
\end{corollary}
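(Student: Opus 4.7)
The plan is to obtain Corollary \ref{31} as a direct specialization of Theorem \ref{20}. I take $T = \{1, 2, \ldots, n\}$ equipped with the discrete topology, which is automatically compact Hausdorff since $T$ is finite, and let $\mu$ be the counting measure on $T$, which is trivially a Radon measure. Set $\mathscr{A} = {\mathbb B}({\mathscr H})$ and define $\alpha(i) = \omega_i$, $\beta(i) = \nu_i$; these are nonnegative measurable functions on $T$. Any field of operators over a finite discrete space is norm continuous, so the given families $(A_i)_{i=1}^n$ and $(B_i)_{i=1}^n$ lie in $\mathcal{C}(T, \mathscr{A})$, and they have the synchronous Hadamard property by hypothesis.

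The next step is to translate the Bochner integrals in \eqref{22} into finite sums. For any continuous field $(X_t)_{t \in T}$ in $\mathscr{A}$, one has $\int_T X_t \, d\mu(t) = \sum_{i=1}^n X_i$, as follows by applying an arbitrary linear functional $\varphi \in \mathscr{A}^*$ to both sides: $\varphi\bigl(\sum_{i=1}^n X_i\bigr) = \sum_{i=1}^n \varphi(X_i) = \int_T \varphi(X_t)\, d\mu(t)$. Applying this identification to each of the eight integrals in \eqref{22} converts them, term by term, into the eight sums appearing in \eqref{13}, whence Corollary \ref{31} follows.

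There is essentially no obstacle here: the argument is a straightforward instantiation of Theorem \ref{20}. The only point requiring any attention is the identification of Bochner integrals with finite sums noted above, which is a direct consequence of the defining property of the Bochner integral recalled in the introduction. One could alternatively give a self-contained proof by repeating the symmetrization argument used in Theorem \ref{20} with summation replacing integration, but that would merely duplicate work already done.
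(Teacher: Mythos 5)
Your proposal is correct and coincides with the paper's own derivation: the paper obtains Corollary \ref{31} precisely by specializing Theorem \ref{20} to the finite discrete space $T=\{1,\dots,n\}$ with counting measure and $\alpha(i)=\omega_i$, $\beta(i)=\nu_i$. Your additional remarks on why the counting measure is Radon and why the Bochner integrals reduce to finite sums simply make explicit what the paper leaves implicit.
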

\begin{example}
If $f_1, f_2,g_1, g_2\in \mathbf{L}^1(\mathbb{R})$ such that $f_1, f_2$ are  increasing   and $g_1, g_2$ are decreasing  on $\mathbb{R}$, then we put
$A_t=\left(\begin{array}{cc}
 f_1(t)&h(t)\\
 0&g_1(t)
 \end{array}\right)$ and
 $B_t=\left(\begin{array}{cc}
 f_2(t)&0\\
 k(t)&g_2(t)
 \end{array}\right)$
 in which $h, k\in\mathbf{L}^1(\mathbb{R})$ are arbitrary  and $t\in \mathbb{R}$. It follows from  $\big(f_1(t)-f_1(s)\big)\big(f_2(t)-f_2(s)\big)$ and
 $\big(g_1(t)-g_1(s)\big)\big(g_2(t)-g_2(s)\big)$ are positive for all $s,t\in \mathbb{R}$ that the matrix
{\footnotesize \begin{align*}
\big{(}A_t-A_s\big{)}\circ\big{(}B_t-B_s\big{)}=\left(\begin{array}{cc}
 \left(f_1(t)-f_1(s)\right)\left(f_2(t)-f_2(s)\right)&0\\
 0&\left(g_1(t)-g_1(s)\right)\left(g_2(t)-g_2(s)\right)
 \end{array}\right)
\end{align*}}
is positive. Using Theorem \ref{20} we have the inequality
{\footnotesize\begin{align*}
&\left(\begin{array}{cc}
 \int_\mathbb{R} \alpha(s) ds\int_\mathbb{R}\beta(t)f_1(t)f_2(t) dt&0\\
 0&\int_\mathbb{R} \alpha(s) ds\int_\mathbb{R}\beta(t)g_1(t)g_2(t) dt
 \end{array}\right) \\&\quad+ \left(\begin{array}{cc}
 \int_\mathbb{R} \beta(s) ds\int_\mathbb{R}\alpha(t)f_1(t)f_2(t) dt&0\\
 0&\int_\mathbb{R} \beta(s) ds\int_\mathbb{R}\alpha(t)g_1(t)g_2(t) dt
 \end{array}\right) \nonumber\\&\geq\left(\begin{array}{cc}
 \int_\mathbb{R}\int_\mathbb{R}\alpha(t)\beta(s)f_1(t)f_2(s) dtds&0\\
 0&\int_\mathbb{R}\alpha(t)\beta(s)g_1(t)g_2(s) dtds
 \end{array}\right)\\&\quad+\left(\begin{array}{cc}
 \int_\mathbb{R}\int_\mathbb{R}\alpha(s)\beta(t)f_1(t)f_2(s) dtds&0\\
 0&\int_\mathbb{R}\alpha(s)\beta(t)g_1(t)g_2(s) dtds
 \end{array}\right),
 \end{align*}}
 where $\alpha, \beta\in \mathbf{L}^1(\mathbb{R})$.
\end{example}

Let us consider $A_j,B_j\in{\mathbb B}({\mathscr H})\,\,(1\leq j\leq n)$  and let two nonnegative numbers $\omega_1, \cdots, \omega_n$, $\nu_1, \cdots, \nu_n$ such that $W_n=\sum_{ j=1}^n \omega_j$, $V_n=\sum_{ j=1}^n \nu_j$. We define the mapping $Q:{N}_+\times{N}_+\rightarrow {\mathbb B}({\mathscr H})$,
{\footnotesize\begin{align*}
 Q(k,n,A_j,B_j)&=W_k\sum_{ j=1}^k\nu_j (A_j\circ B_j)+V_k\sum_{ j=1}^n\omega_j (A_j\circ B_j)\\&\qquad +\Big{(}\sum_{ i=k+1}^n \omega_i A_i\Big{)}\circ\Big{(}\sum_{ j=1}^n
 \nu_j B_j\Big{)}+\Big{(}\sum_{ i=1}^n \nu_i A_i\Big{)}\circ\Big{(}\sum_{ j=k+1}^n
 \omega_j B_j\Big{)} \\&\qquad +\Big{(}\sum_{ i=k+1}^n \nu_i A_i\Big{)}\circ\Big{(}\sum_{ j=1}^n
 \omega_j B_j\Big{)}+\Big{(}\sum_{ i=1}^n\omega_i A_i\Big{)}\circ\Big{(}\sum_{ j=k+1}^n
 \omega_j B_j\Big{)},
\end{align*}}
where $k=1,2,\cdots, n,$ and
{\footnotesize\begin{align}\label{pool}
\sum_{ j=n+1}^n\omega_jA_j=\sum_{ j=n+1}^n\omega_jB_j=\sum_{ j=n+1}^n\nu_jA_j=\sum_{ j=n+1}^n\nu_j B_j=0
\end{align}}
 is assumed. Using the definition of $Q$ and the relation \eqref{pool} we get
 \\
$(a)\,\, Q(1,n,A_j,B_j)=\Big{(}\sum_{ i=1}^n \omega_i A_i\Big{)}\circ\Big{(}\sum_{ j=1}^n
 \nu_j B_j\Big{)}+\Big{(}\sum_{ i=1}^n \nu_iA_i\Big{)}\circ\Big{(}\sum_{ j=1}^n
 \omega_j B_j\Big{)}$\\
$(b)\,\,Q(n,n,A_j, B_j)=\sum_{ i=1}^n \omega_i\sum_{ j=1}^n\nu_j (A_j\circ B_j)+\sum_{ i=1}^n \nu_i\sum_{ j=1}^n\omega_j (A_j\circ B_j)$.\\
  \\
Now, in the next theorem we show a refinement of inequality \eqref{13}.
\begin{theorem}\label{31}
 Suppose that  $A_j,B_j\in{\mathbb B}({\mathscr H})\,\,(1\leq j\leq n)$ are selfadjoint operators  with the synchronous Hadamard property, $Q$ is defined as above,
 $\omega_1, \cdots, \omega_n$,  $\nu_1, \cdots, \nu_n$ are positive numbers. Then
{ \begin{align*}
 Q(n,n,A_j, B_j)\geq \cdots\geq Q(k,n,A_j,B_j)\geq\cdots\geq Q(1,n,A_j,B_j)
\end{align*}}
for each $k=1,2, \cdots, n$.
\end{theorem}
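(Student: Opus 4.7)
The plan is to prove the chain by establishing each single-step inequality $Q(k+1,n,A_j,B_j)\geq Q(k,n,A_j,B_j)$ for $1\leq k\leq n-1$, and then to iterate. The endpoint identifications $(a)$ and $(b)$ place $Q(1,n,A_j,B_j)$ and $Q(n,n,A_j,B_j)$ at the two sides of Corollary \ref{31}'s inequality \eqref{13}, so the chain automatically realises the refinement.

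To establish the single-step inequality, I would compute $\Delta_k:=Q(k+1,n,A_j,B_j)-Q(k,n,A_j,B_j)$ directly from the definition. Using $W_{k+1}=W_k+\omega_{k+1}$, $V_{k+1}=V_k+\nu_{k+1}$, and the analogous telescoping identities $\sum_{i=k+1}^n=\sum_{i=k+2}^n+(\text{term at }k{+}1)$ for the tail sums, the bulk of $Q(k+1,n)$ and $Q(k,n)$ cancels, leaving expressions that isolate the index $k+1$ against the remaining indices. Each surviving Hadamard product is then expanded by the discrete analogue of identity \eqref{suit}, $\big(\sum_i \alpha_i A_i\big)\circ B=\sum_i \alpha_i(A_i\circ B)$, converting $\Delta_k$ into a finite linear combination of cross-terms of the form $c_{ij}^{(k)}\,A_i\circ B_j$.

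At this point I would mimic the symmetrisation trick from the proof of Theorem \ref{20}: write the resulting double sum twice, swap the dummy indices $i\leftrightarrow j$ in the second copy, and average with the factor $\tfrac12$. The purpose is to recombine the cross-terms into the synchronous Hadamard form, producing an expression
\begin{align*}
\Delta_k=\tfrac12\sum_{i,j}\gamma_{ij}^{(k)}\,(A_i-A_j)\circ(B_i-B_j)
\end{align*}
with coefficients $\gamma_{ij}^{(k)}\geq 0$ built out of products of the nonnegative weights $\omega_\ell,\nu_\ell$. The synchronous Hadamard hypothesis $(A_i-A_j)\circ(B_i-B_j)\geq 0$ then yields $\Delta_k\geq 0$ term by term.

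The main obstacle is purely combinatorial: $Q(k,n,A_j,B_j)$ is a six-term sum and each term, once expanded via \eqref{suit}, becomes a double sum in $i,j$, so $\Delta_k$ is a sum of roughly a dozen double-indexed expressions. Tracking which cross-terms $A_i\circ B_j$ cancel between $Q(k+1,n)$ and $Q(k,n)$ and verifying that the survivors recombine, after the $i\leftrightarrow j$ swap, into nonnegative multiples of $(A_i-A_j)\circ(B_i-B_j)$ is the genuine technical content. Once this bookkeeping is organised, the conclusion follows exactly as in Theorem \ref{20}, and the refinement chain $Q(1,n,A_j,B_j)\leq\cdots\leq Q(n,n,A_j,B_j)$ is assembled by iterating the single-step inequality.
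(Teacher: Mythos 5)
Your proposal follows essentially the same route as the paper: the paper likewise proves the chain one step at a time by expanding $Q(k,n,A_j,B_j)-Q(k-1,n,A_j,B_j)$ via the telescoping identities for $W_k$, $V_k$ and the tail sums, cancelling, and regrouping the surviving cross-terms into $\omega_k\sum_{j=1}^{k-1}\nu_j(A_k-A_j)\circ(B_k-B_j)+\nu_k\sum_{j=1}^{k-1}\omega_j(A_k-A_j)\circ(B_k-B_j)\geq 0$, which is nonnegative by the synchronous Hadamard property. The only real difference is that the $i\leftrightarrow j$ symmetrisation you anticipate borrowing from Theorem \ref{20} turns out to be unnecessary: in the one-step difference both cross-terms $A_k\circ B_j$ and $A_j\circ B_k$ already appear with the same coefficient $\omega_k\nu_j$ (resp.\ $\nu_k\omega_j$), so they combine directly into $(A_k-A_j)\circ(B_k-B_j)$ without any averaging.
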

\begin{proof}
For all $k=2,3,\cdots, n$, we have
{\scriptsize\begin{align*}
 &Q(k,n,A_j,B_j)-Q(k-1,n,A_j,B_j)\\&=(W_{k-1}+\omega_k)\left(\sum_{ j=1}^{k-1}\nu_j (A_j\circ B_j)+\nu_kA_k\circ B_k\right)+(V_{k-1}+\nu_k)\left(\sum_{ j=1}^{k-1}\omega_j (A_j\circ B_j)+\omega_kA_k\circ B_k\right)\\&\qquad-
 \left[W_{k-1} \sum_{ j=1}^{k-1}\nu_j(A_j\circ B_j)+V_{k-1}\sum_{ j=1}^{k-1}\omega_j(A_j\circ B_j)\right]+\left(\sum_{ j=k+1}^{n}\omega_jA_j\right)\circ\left(\sum_{ j=1}^{n}\nu_jB_j\right)\\&\qquad+\left(\omega_kA_k+\sum_{ j=1}^{k-1}\omega_jA_j\right)\circ\sum_{ j=k+1}^{n}\nu_jB_j-\left(\omega_kA_k+\sum_{ j=k+1}^{n}\omega_jA_j\right)\circ\sum_{ j=1}^{n}\nu_jB_j\\&\qquad-\sum_{ j=1}^{k-1}\omega_jA_j\circ\left(\nu_kB_k+\sum_{ j=k+1}^{n}\nu_jB_j\right)+\left(\sum_{ j=k+1}^{n}\nu_j A_j\right)\circ\left(\sum_{ j=1}^{n}\omega_jB_j\right)\\&\qquad+
 \left(\nu_kA_k+\sum_{ j=1}^{k-1}\nu_jA_j\right)\circ\sum_{ j=k+1}^{n}\omega_jB_j-\left(\nu_kA_k+\sum_{ j=k+1}^{n}\nu_jA_j\right)\circ\sum_{ j=1}^{n}\omega_jB_j\\&\qquad-\sum_{ j=1}^{k-1}\omega_jB_j\circ\left(\omega_kB_k+\sum_{ j=1}^{n}\omega_jB_j\right)\\&=
 \left[ \omega_j\sum_{ j=1}^{k-1}\nu_j(A_j\circ B_j)+\omega_k(A_k\circ B_k)\sum_{ j=1}^{k-1}\nu_j-\left(\omega_kA_k\circ\sum_{ j=1}^{k-1}\nu_jB_j\right)-\left(\omega_kB_k\circ\sum_{ j=1}^{k-1}\nu_jA_j\right)\right]\\&\qquad+\left[\nu_k\sum_{ j=1}^{k-1}\omega_j(A_j\circ B_j)+\nu_k(A_k\circ B_k)\sum_{ j=1}^{k-1}\omega_j-\left(\nu_kA_k\circ\sum_{ j=1}^{k-1}\omega_jB_j\right)-\left(\nu_kB_k\circ\sum_{ j=1}^{k-1}\omega_jA_j\right)\right]\\&=\omega_k\sum_{ j=1}^{k-1}\nu_j(A_k-A_j)\circ(B_k-B_j)+\nu_k\sum_{ j=1}^{k-1}\omega_j(A_k-A_j)\circ(B_k-B_j)\geq0\\&(\normalsize\textrm{since the sequences}\hspace{.1cm}(A_j)_{j=1}^{n}\hspace{.1cm} \normalsize\textrm{and}\hspace{.1cm} (B_j)_{j=1}^{n}\hspace{.1cm}
 \normalsize\textrm{have synchronous Hadamard property}).
\end{align*}}
\end{proof}
\begin{remark}

If we put  $\omega_j=\nu_j\,\,(1\leq j\leq n)$ in Theorem \ref{31} then we get the inequalities
{ \begin{align*}
 q(n,n,A_j, B_j)\geq \cdots\geq q(k,n,A_j,B_j)\geq\cdots\geq q(1,n,A_j,B_j),
\end{align*}}
in which {\footnotesize\begin{align*}
 q(k,n,A_j,B_j)&=\sum_{ j=1}^k\omega_j\sum_{ j=1}^k\omega_j (A_j\circ B_j)+\Big{(}\sum_{ i=k+1}^n \omega_i A_i\Big{)}\circ\Big{(}\sum_{ j=1}^n
 \omega_j B_j\Big{)} \\&\qquad +\Big{(}\sum_{ i=k+1}^n \omega_i A_i\Big{)}\circ\Big{(}\sum_{ j=1}^n
 \omega_j B_j\Big{)}\qquad(k=1,2,\cdots, n).
\end{align*}}
\end{remark}
In next result, we show an extension of \eqref{bigbag} for interpolational means.
 \begin{theorem}\label{41}
Let ${\mathscr A}$ be a $C^*$-algebra, $T$ be a compact Hausdorff
space equipped with a Radon measure $\mu$ as a totaly order set, let
$(A_t)_{t\in T}, (B_t)_{t\in T} $ be positive increasing fields in
$\mathcal{C}(T,\mathscr A)$ and let $\alpha: T\rightarrow [0,
+\infty)$ be a measurable function. Then
{\scriptsize\begin{align*}
\int_{T} \alpha(s) d\mu(s)\int_{T}\alpha(t)(A_t\circ B_t) d\mu(t)\geq\Big{(}\int_{T}\alpha(t) (A_tm_{r,\alpha} B_t) d\mu(t)\Big{)}\circ\Big{(}\int_{T}\alpha(s) (A_sm_{r,1-\alpha} B_s) d\mu(s)\Big{)}
\end{align*}}
for all $\alpha\in[0,1]$ and all $r\in[-1,1]$.
\end{theorem}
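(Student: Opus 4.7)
The plan is to derive the inequality from \eqref{22big} applied to the pair of modified continuous fields
$\widetilde A_t := A_t m_{r,\alpha} B_t$ and $\widetilde B_t := A_t m_{r,1-\alpha} B_t$
(the symbol $\alpha$ now does double duty as both the weight function on $T$ and the mean parameter in $[0,1]$, as in the statement), together with a comparison between $A_t\circ B_t$ and $\widetilde A_t\circ\widetilde B_t$. Inspection shows that the right-hand side of \eqref{22big} applied to $(\widetilde A_t),(\widetilde B_t)$ is exactly the right-hand side of the target inequality, so the whole task reduces to (a) verifying that $(\widetilde A_t),(\widetilde B_t)$ satisfy the hypotheses of \eqref{22big} and (b) passing from $\widetilde A_t\circ\widetilde B_t$ back to $A_t\circ B_t$ on the left.

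For step (a), axiom~(1) of the Kubo--Ando theory says every operator mean is jointly monotone in its two arguments. Hence the hypothesis that $(A_t),(B_t)$ are positive increasing propagates to $(\widetilde A_t),(\widetilde B_t)$. For any $s\le t$ in the totally ordered set $T$, the differences $\widetilde A_t-\widetilde A_s$ and $\widetilde B_t-\widetilde B_s$ are therefore positive operators, and the Schur product theorem yields
\begin{align*}
(\widetilde A_t-\widetilde A_s)\circ(\widetilde B_t-\widetilde B_s)\geq 0,
\end{align*}
i.e.\ $(\widetilde A_t),(\widetilde B_t)$ have the synchronous Hadamard property. Inequality \eqref{22big} then gives
\begin{align*}
\int_T \alpha(s)\,d\mu(s)\int_T \alpha(t)(\widetilde A_t\circ \widetilde B_t)\,d\mu(t)\ \geq\ \Big(\int_T\alpha(t)\widetilde A_t\,d\mu(t)\Big)\circ\Big(\int_T\alpha(s)\widetilde B_s\,d\mu(s)\Big),
\end{align*}
the right-hand side of which is exactly the target right-hand side.

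Step (b) is the main obstacle. It suffices to establish the pointwise operator inequality
\begin{align*}
A_t\circ B_t\ \geq\ (A_tm_{r,\alpha}B_t)\circ(A_tm_{r,1-\alpha}B_t)\qquad (t\in T),
\end{align*}
after which integration against the nonnegative weight $\alpha(t)\,d\mu(t)$ and multiplication by $\int_T\alpha(s)\,d\mu(s)\geq 0$ chains with the previous display to give the theorem. This pointwise bound is the natural extension to general interpolational power means of the identity $(A\sharp_\mu B)\circ(A\sharp_{1-\mu}B)\leq A\circ B$ that underlies \eqref{bigbag}. The route I would pursue reduces it, via the tensor filtration $A\circ B=U^*(A\otimes B)U$ and the operator monotonicity of the representing function $F_{r,\alpha}$, to the scalar comparison
\begin{align*}
F_{r,\alpha}(x)\,F_{r,1-\alpha}(x)\ \leq\ x\qquad (x>0),
\end{align*}
which can be checked directly from $F_{r,\alpha}(x)=(1-\alpha+\alpha x^r)^{1/r}$ using the $r\in[-1,1]$ range. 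Combining (a) and (b) yields the theorem.
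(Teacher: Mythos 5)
Your overall architecture is the same as the paper's: its proof of Theorem \ref{41} also rests on (i) the observation that the fields $\widetilde A_t=A_tm_{r,\alpha}B_t$ and $\widetilde B_t=A_tm_{r,1-\alpha}B_t$ are increasing by monotonicity of operator means, hence have the synchronous Hadamard property, so that the Chebyshev argument of \eqref{22big} applies to them (the paper reruns the symmetrization computation of Theorem \ref{20} inline rather than quoting \eqref{22big}, but it is the same calculation), and (ii) the pointwise bound $A_t\circ B_t\geq(A_tm_{r,\alpha}B_t)\circ(A_tm_{r,1-\alpha}B_t)$, which is exactly the paper's inequality \eqref{path1}. So you have isolated the right two ingredients, and your step (a) is sound.

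The gap is in your proof of (ii). The scalar inequality $F_{r,\alpha}(x)F_{r,1-\alpha}(x)\leq x$ to which you reduce is \emph{false} for $r\in(0,1]$: since
\[
(1-\alpha+\alpha y)(\alpha+(1-\alpha)y)-y=\alpha(1-\alpha)(y-1)^2\geq 0\qquad(y>0),
\]
putting $y=x^r$ gives $F_{r,\alpha}(x)F_{r,1-\alpha}(x)=\big[(1-\alpha+\alpha x^r)(\alpha+(1-\alpha)x^r)\big]^{1/r}\geq x$ whenever $r>0$, with strict inequality unless $x=1$ or $\alpha\in\{0,1\}$ (for $r=1$, $\alpha=1/2$, $x=4$ the product is $25/4>4$); the inequality you want holds only for $r\in[-1,0]$. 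Moreover, even on that range the Hadamard-product estimate would not follow from the scalar one by operator monotonicity, because $(Am_{r,\alpha}B)\circ(Am_{r,1-\alpha}B)$ is not a functional-calculus expression in a single operator. The paper's proof of \eqref{path1} avoids any scalar comparison: it writes $A\circ B=(A\circ B)m_{r,\alpha}(A\circ B)$, uses the two representations $A\circ B=U^*(A\otimes B)U=U^*(B\otimes A)U$, applies the compression property $T^*(X\sigma Y)T\leq(T^*XT)\sigma(T^*YT)$ of Kubo--Ando means together with the tensor superadditivity $(X\sigma Y)\otimes(Z\sigma W)\leq(X\otimes Z)\sigma(Y\otimes W)$, and finally uses $Bm_{r,\alpha}A=Am_{r,1-\alpha}B$. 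It is the compression step that produces the inequality in the required direction for all $r\in[-1,1]$, precisely where your scalar route points the wrong way; replacing your step (b) by this argument completes the proof.
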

\begin{proof}
 We have
 \begin{align}\label{path1}
 A_t\circ B_t&=(A_t\circ B_t)m_{r,\alpha}(A_t\circ B_t)=(U^*(A_t\otimes B_t)U)m_{r,\alpha}(U^*(B_t\otimes A_t)U)\nonumber\\&\geq U^*((A_t\otimes B_t)m_{r,\alpha}(B_t\otimes A_t))U\geq U^*((A_tm_{r,\alpha} B_t)\otimes(B_tm_{r,\alpha} A_t))U\nonumber\\&=(A_tm_{r,\alpha} B_t)\circ(B_tm_{r,\alpha} A_t)=(A_tm_{r,\alpha} B_t)\circ(A_tm_{r,1-\alpha} B_t),
 \end{align}
where $t\in T$; see \cite[p. 174]{abc}. Let $s, t\in T$. Without loss of generality assume that $s\preceq t$. Then by the property $(\textrm{i})$ of the operator mean, we have $0\leq(A_tm_{r,1-\alpha} B_t)-(A_sm_{r,1-\alpha} B_s)$ and $0\leq(A_tm_{r,\alpha} B_t)-(A_sm_{r,\alpha} B_s)$. Then
{\tiny\begin{align*}
 &\int_{T} \alpha(s) d\mu(s)\int_{T}\alpha(t)(A_t\circ B_t) d\mu(t)\\&\qquad-\Big{(}\int_{T}\alpha(t) (A_tm_{r,\alpha} B_t) d\mu(t)\Big{)}\circ\Big{(}\int_{T}\alpha(s) (A_sm_{r,1-\alpha} B_s) d\mu(s)\Big{)}\\&
=\int_{T} \int_{T}\alpha(s)\alpha(t)(A_t\circ B_t)
d\mu(t)d\mu(s)\\&\qquad-\int_{T}\int_{T}\alpha(t)\alpha(s)\Big{(}(A_tm_{r,\alpha}
B_t) \circ (A_sm_{r,1-\alpha} B_s)\Big{)} d\mu(t)d\mu(s)\,\, (\normalsize\textrm{by~} \ref{suit})\\&\geq
\int_{T} \int_{T}\alpha(s)\alpha(t)\Big{(}(A_tm_{r,\alpha} B_t)\circ
(A_tm_{r,1-\alpha}B_t)\Big{)}
d\mu(t)d\mu(s)\\&\quad-\int_{T}\int_{T}\alpha(t)\alpha(s)\Big{(}(A_tm_{r,\alpha}
B_t) \circ (A_sm_{r,1-\alpha} B_s)\Big{)} d\mu(t)d\mu(s)\\&\qquad\qquad\qquad\qquad\qquad\qquad\qquad\qquad
(\normalsize\textrm{by equation \eqref{path1}})\\&=\int_{T} \int_{T}\alpha(s)\alpha(t)\Big{(}(A_tm_{r,\alpha} B_t)\circ
(A_tm_{r,1-\alpha}B_t)\Big{)}-
\alpha(t)\alpha(s)\Big{(}(A_tm_{r,\alpha}
B_t) \circ (A_sm_{r,1-\alpha} B_s)\Big{)} d\mu(t)d\mu(s)\\&=\frac{1}{2}\Big[
\int_{T} \int_{T}\alpha(s)\alpha(t)\Big{(}(A_tm_{r,\alpha} B_t)\circ
(A_tm_{r,1-\alpha}B_t)\Big{)}
-\alpha(t)\alpha(s)\Big{(}(A_tm_{r,\alpha}
B_t) \circ (A_sm_{r,1-\alpha} B_s)\Big{)} d\mu(t)d\mu(s)\\&\quad+
\int_{T} \int_{T}\alpha(t)\alpha(s)\Big{(}(A_sm_{r,\alpha} B_s)\circ
(A_sm_{r,1-\alpha}B_s)\Big{)}
-\alpha(s)\alpha(t)\Big{(}(A_sm_{r,\alpha}
B_s) \circ (A_tm_{r,1-\alpha} B_t)\Big{)} d\mu(s)d\mu(t)\Big]\\&\qquad\qquad\qquad\qquad\qquad\qquad\qquad\qquad
(\normalsize\textrm{interchanging}\, s\, \normalsize\textrm{and}\, t\, \normalsize\textrm{in the second term})\\&={1\over2}\int_{T}
\int_{T}\Big{[}\alpha(s)\alpha(t)\Big{(}(A_tm_{r,t} B_t)\circ
(A_tm_{r,1-\alpha}B_t)\Big{)}
-\alpha(t)\alpha(s)\Big{(}(A_tm_{r,\alpha} B_t) \circ
(A_sm_{r,\alpha} B_s)\Big{)}\\&
\qquad+\alpha(t)\alpha(s)\Big{(}(A_sm_{r,\alpha} B_s)\circ
(A_sm_{r,1-\alpha}B_s)\Big{)}
-\alpha(s)\alpha(t)\Big{(}(A_sm_{r,\alpha}
B_s) \circ (A_tm_{r,1-\alpha} B_t)\Big{)} \Big{]} d\mu(t)d\mu(s)\\&\qquad\qquad\qquad\qquad\qquad\qquad\qquad\qquad
(\normalsize\textrm{by equation \eqref{suit}})\\&
={1\over2}\int_{T}\int_{T}\alpha(s)\alpha(t)\Big{[}(A_tm_{r,\alpha}
B_t)- (A_sm_{r,\alpha} B_s)\Big{]}\circ\Big{[}(A_tm_{r,1-\alpha}
B_t)- (A_sm_{r,1-\alpha} B_s)\Big{]} d\mu(t) d\mu(s)\\&\geq0
\qquad\qquad\qquad \qquad (\normalsize\textrm{by
the property (\textrm{i}) of the operator mean}).
\end{align*}}
\end{proof}
In the discrete case $T=\{1,\cdots, n\}$, if $\alpha(i)=\omega_i$ and $\beta(i)=\nu_i$, where $\omega_i, \nu_i\geq0$ $(1\leq i\leq n)$,
 then Theorem \ref{41} forces the next result.
 \begin{corollary}
Assume that  $A_j,B_j\in{\mathbb B}({\mathscr H})\,\,(1\leq j\leq n)$ are positive increasing operators and
 $\omega_1, \cdots, \omega_n$,  $\nu_1, \cdots, \nu_n$ are positive numbers. Then
{\footnotesize\begin{align*}
\sum_{i=1}^{n}w_i\sum_{i=1}^{n}\nu_i(A_i\circ B_i)\geq\Big{(}\sum_{i=1}^{n}w_i (A_im_{r,\alpha} B_i)\Big{)}\circ\Big{(}\sum_{j=1}^{n}\nu_j (A_jm_{r,1-\alpha} B_j)\Big{)}
\end{align*}}
for all $\alpha\in[0,1]$ and all $r\in[-1,1]$.
\end{corollary}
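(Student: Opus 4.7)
The statement is the discrete specialization of Theorem~\ref{41}, and the plan is simply to translate that theorem into the finite setting. First I would take $T = \{1, 2, \ldots, n\}$ equipped with its natural total order and with $\mu$ the counting measure. Since $T$ is finite, it is automatically a compact Hausdorff totally ordered space, and every field on $T$ is norm continuous; the hypothesis that $A_1 \leq \cdots \leq A_n$ and $B_1 \leq \cdots \leq B_n$ are positive operators therefore supplies the positive increasing field required by Theorem~\ref{41}.

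Next, I would identify the measurable weight function $\alpha$ of Theorem~\ref{41} with the sequence $(\omega_i)$ on one factor and with $(\nu_i)$ on the other. Under this identification each Bochner integral $\int_T \alpha(t)\, F(A_t,B_t)\, d\mu(t)$ collapses to the finite weighted sum $\sum_{i=1}^n \omega_i\, F(A_i,B_i)$, and the Hadamard-product scaling identity \eqref{suit} still applies termwise. Substituting these choices directly into the conclusion of Theorem~\ref{41} reproduces the inequality of the corollary.

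The only point requiring genuine care is the presence of two distinct weight sequences $(\omega_i)$ and $(\nu_i)$ in the statement, since Theorem~\ref{41} is phrased with a single weight. I would handle this by repeating the proof of Theorem~\ref{41} with the two-weight analogue in mind: the pointwise key inequality
\[
A_i \circ B_i \geq (A_i m_{r,\alpha} B_i) \circ (A_i m_{r,1-\alpha} B_i)
\]
is applied term by term to the double sum $\sum_i \sum_j \omega_i \nu_j (A_j \circ B_j)$, after which the $i \leftrightarrow j$ symmetrization from the proof of Theorem~\ref{41} is carried out with $\omega_i \nu_j$ in place of $\alpha(s)\alpha(t)$. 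The operator-monotonicity of the interpolational means $m_{r,\alpha}, m_{r,1-\alpha}$ together with the monotonicity of $(A_i)$ and $(B_i)$ then expresses the difference between the two sides as a non-negative weighted combination of terms of the form
\[
\bigl[(A_j m_{r,\alpha} B_j) - (A_i m_{r,\alpha} B_i)\bigr] \circ \bigl[(A_j m_{r,1-\alpha} B_j) - (A_i m_{r,1-\alpha} B_i)\bigr],
\]
each of which is positive by the induced synchronous Hadamard property. I expect this symmetrization with two weights to be the only step demanding actual work; the rest is a routine specialization of Theorem~\ref{41} to the counting measure on a finite chain.
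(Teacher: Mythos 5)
You correctly identify that the only nontrivial issue is the presence of two distinct weight sequences, and your reduction of the single-weight case ($\nu_i=\omega_i$) to Theorem \ref{41} via the counting measure on the chain $\{1,\dots,n\}$ is exactly what the paper does (it offers no further argument). However, your proposed fix for the two-weight case does not work, and in fact cannot work. The symmetrization in the proof of Theorem \ref{41} relies crucially on the kernel $\alpha(s)\alpha(t)$ being \emph{symmetric} under the interchange $s\leftrightarrow t$. Writing $X_j=A_jm_{r,\alpha}B_j$ and $Y_j=A_jm_{r,1-\alpha}B_j$, the pointwise inequality $A_j\circ B_j\geq X_j\circ Y_j$ reduces the claim to the nonnegativity of $\sum_{i,j}\omega_i\nu_j\,(X_j-X_i)\circ Y_j$, and averaging this with its $i\leftrightarrow j$ image gives $\tfrac12\sum_{i,j}(X_j-X_i)\circ\bigl(\omega_i\nu_j Y_j-\omega_j\nu_i Y_i\bigr)$, which collapses to the desired sum $\tfrac12\sum_{i,j}c_{ij}(X_j-X_i)\circ(Y_j-Y_i)$ with $c_{ij}\geq0$ only when $\omega_i\nu_j=\omega_j\nu_i$, i.e.\ when $\nu$ is proportional to $\omega$. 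With genuinely different weights the coefficients of $X_i\circ Y_j$ and $X_j\circ Y_i$ differ and no sign conclusion follows.

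Indeed, the two-weight statement is false. Take $n=2$, scalars $A=(1,2)$ and $B=(1,2)$ (positive and increasing), $r=1$ and $\alpha=0$, so that $A_im_{1,0}B_i=A_i$ and $A_im_{1,1}B_i=B_i$, and weights $\omega=(\varepsilon,1)$, $\nu=(1,\varepsilon)$ with $\varepsilon>0$ small. The left-hand side is $(1+\varepsilon)(1+4\varepsilon)\to1$, while the right-hand side is $(\varepsilon+2)(1+2\varepsilon)\to2$ as $\varepsilon\to0$, so the inequality fails. The corollary is therefore only correct with a single weight ($\nu_i=\omega_i$), which is all that Theorem \ref{41} delivers; the assignment ``$\beta(i)=\nu_i$'' in the paper's preamble to this corollary is a leftover from the genuinely two-weight Theorem \ref{20} and has no counterpart in Theorem \ref{41}. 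Your instinct that this step ``demands actual work'' was right; the work shows it cannot be done.
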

 %~~~~~~~~~~~~~~~~~~~~~~~~~~~~~~~~~~~~~~~~~~~~~~~~~~~~~~~~~~~~~~~~~~~~~~~~~~~~~~~~~~~~~~~~~~~~~~~~~~~~~~~~~~~~~~~~~~~~~~~~~~~~~~~~~~~~~~~~~~~~~~~~~~~~~~~~~~~~~~~~~~~~~~~~~~~

\bibliographystyle{amsplain}

\end{document}